\theoremstyle{plain}
\newtheorem{lemma}{Lemma}[section]
\theoremstyle{definition}
\newcommand{\card}{\operatorname{card}}
\newcommand{\End}{\operatorname{End}}
\newcommand{\Bili}{\operatorname{Bil}_\bA}
\newcounter{parag}[section]
\newcommand{\dst}{\displaystyle}
\newcommand{\ens}{\enspace}
\newcommand{\al}{\alpha}
\newcommand{\alb}{\text{\boldmath{$\alpha$}}}
\newcommand{\be}{\beta}
\newcommand{\beb}{\text{\boldmath{$\beta$}}}
\newcommand{\ga}{\gamma}
\newcommand{\gab}{\text{\boldmath{$\gamma$}}}
\newcommand{\de}{\delta}
\newcommand{\deb}{\text{\boldmath{$\delta$}}}
\newcommand{\De}{\Delta}
\newcommand{\la}{\lambda}
\newcommand{\om}{\omega}
\newcommand{\omb}{\text{\boldmath{$\om$}}}
\newcommand{\sig}{{\sigma}}
\renewcommand{\th}{{\theta}}
\newcommand{\Th}{\Theta}
\newcommand{\ph}{\varphi}
\newcommand{\ze}{{\zeta}}
\newcommand{\ao}{\{\,}          
\newcommand{\af}{\,\}}          
\newcommand{\ID}{\operatorname{Id}}
\newcommand{\conc}{\raisebox{.15ex}{$\centerdot$}}
\newcommand{\bul}{\bullet}
\newcommand{\pa}{\partial}
\newcommand{\na}{\nabla}
\newcommand{\ii}{^{-1}}
\newcommand{\est}{\emptyset}
\newcommand{\ceil}[1]{\lceil #1 \rceil}
\newcommand{\sh}[3]{\operatorname{sh}\begin{pmatrix}#1,\, #2\\#3\end{pmatrix}}
\newcommand{\tsh}[3]{\operatorname{sh}{\big( \begin{smallmatrix}#1,\,
#2\\#3\end{smallmatrix} \big)}}
\newcommand{\val}{\operatorname{val}}
\newcommand{\vl}[1]{\val\left(#1\right)}
\newcommand{\ie}{{i.e.}}
\newcommand{\eg}{{e.g.}}
\newcommand{\wrt}{{with respect to}}
\newcommand{\rhs}{{right-hand side}}
\newcommand{\bA}{\mathbf{A}}
\newcommand{\bB}{\mathbf{B}}
\newcommand{\C}{\mathbb{C}}
\newcommand{\N}{\mathbb{N}}
\newcommand{\R}{\mathbb{R}}
\newcommand{\Z}{\mathbb{Z}}
\newcommand{\cN}{\mathcal{N}}
\newcommand{\cR}{\mathcal{R}}
\newcommand{\cU}{\mathcal{U}}
\newcommand{\cV}{\mathcal{V}}
\newcommand{\cVt}{{\cV\hspace{-.45em}\raisebox{.35ex}{--}}}
\newcommand{\dd}{{\mathrm d}}
\newcommand{\ee}{{\mathrm e}}
\newcommand{\wh}{\widehat}
\newcommand{\wt}{\widetilde}
\newcommand{\htb}[1]{\raisebox{-.08ex}{${\stackrel{
            \raisebox{-.18ex}{$\scriptscriptstyle\wedge$}
          }{#1}
     }$}}
\newcommand{\norm}[1]{\Vert#1\Vert}
\newcommand{\gA}{\mathscr A}
\newcommand{\gF}{\mathscr F}
\newcommand{\gM}{\mathfrak M}
\newcounter{parage}
\renewcommand{\theparage}{\thesection.{\arabic{parage}.}}
\newcommand{\parage}{\medskip \addtocounter{parage}{1} 
\noindent{\theparage\ } }
\begin{document}


\address{
Institut de m\'ecanique c\'eleste, CNRS\\ 
77 av.\ Denfert-Rochereau, 75014 Paris, France\\
email:\,\tt{sauzin@imcce.fr}
}


\title{Initiation to mould calculus through the example of saddle-node singularities}

\author{David Sauzin}


\maketitle

\begin{abstract}
This article proposes an initiation to \'Ecalle's mould calculus, a
powerful combinatorial tool which yields surprisingly explicit formulas for the
normalising series attached to an analytic germ of singular vector field.
This is illustrated on the case of saddle-node singularities,
generated by two-dimensional vector fields which are formally conjugate to
Euler's vector field $x^2\frac{\pa}{\pa x}+(x+y)\frac{\pa}{\pa y}$,
and for which the formal normalisation proves to be resurgent in~$1/x$.
\end{abstract}


\setcounter{section}{-1}

\section{Introduction}


This article is a survey of a part of a longer article \cite{irma} which aims at
presenting in a systematic way some aspects of \'Ecalle's theory of moulds, 
with the example of the classification of saddle-node singularities as a red thread.

Mould calculus was developed by J.~\'Ecalle in relation with his Resurgence
theory almost thirty years ago \cite{Eca81}
and the application to the saddle-node problem was indicated in \cite{Eca84} in
concise manner.

Here, we omit much of the material of \cite{irma} and present the arguments in a
different order, trying to explain the formal part of \'Ecalle's method in fewer
pages and hoping to arouse the reader's interest by this illustration of
mould calculus.




\section{Saddle-node singularities}


Germs of holomorphic singular foliations of $(\C^2,0)$ are defined by analytic
differential equations of the form
$P(x,y)\dd y - Q(x,y)\dd x = 0$ with $P$ and $Q\in\C\{x,y\}$ both vanishing at
the origin.
Classifying such singular foliations means describing the conjugacy classes under
the action of the group of germs of analytic invertible transformations of
$(\C^2,0)$; this is equivalent to classifying the corresponding singular vector
fields
$P(x,y)\frac{\pa\,}{\pa x} + Q(x,y)\frac{\pa\,}{\pa y}$ up to time-change.

\emph{We shall examine the case in which the foliation is assumed to be {formally}
conjugate to the standard saddle-node foliation, defined by
$x^2\dd y - y\dd x=0$.}
This is the simplest case for differential equations whose $1$-jets admit
$0$ and~$1$ as eigenvalues (the possible formal normal forms are $x^{p+1}\dd
y - (1+\la x)\dd x=0$, with $p\in\N^*$ and $\la\in\C$, and $y\,\dd x=0$).

In this case, by a classical theorem of Dulac (\cite{MR}, \cite{Moussu}), the
foliation can be analytically reduced to the form
$x^2\dd y - A(x,y)\dd x=0$, with
\begin{equation}	\label{eqassA}
A(x,y) \in\C\{x,y\}, \quad
A(0,y) = y, \quad
\frac{\pa^2 A}{\pa x\pa y}(0,0) = 0.
\end{equation}
Moreover, the vector fields corresponding to the foliation and to the normal
form,

\begin{equation}	\label{defXX}
X = x^2 \frac{\pa\,}{\pa x} + A(x,y) \frac{\pa\,}{\pa y}
\quad\text{and}\quad
X_0 = x^2 \frac{\pa\,}{\pa x} + y \frac{\pa\,}{\pa y},
\end{equation}
are themselves formally conjugate:
there is a unique formal transformation of the form
\begin{equation}	\label{eqdefthph}
\th(x,y) = \big(x, \ph(x,y) \big), \qquad 
\ph(x,y) = y + \sum_{n\ge0} \ph_n(x) y^n, \qquad 
\ph_n(x) \in x\C[[x]].
\end{equation}
such that 
\begin{equation}	\label{eqconjugeq}
X = \th^* X_0
\end{equation}
(no time-change is needed here).
We call it the \emph{formal normalisation} of the foliation (or of~$X$ itself).

Thus, in this case, our problem of analytic classification boils down to
describing the analytic conjugacy classes of vector fields of the form~$X$ under
the action of the group of ``fibred'' transformations (leaving~$x$ unchanged),
knowing that they are all formally conjugate of one another (since all of
them are formally conjugate to~$X_0$).

Our starting point will be the data~\eqref{eqassA}--\eqref{defXX}.
We shall study the formal conjugacies~\eqref{eqdefthph} by
means of \'Ecalle's ``moulds'' and see how this leads to the ``resurgent''
character of the formal series which appear.
(A similar study could be performed for the more general normal forms, with
any~$p$ and~$\la$.)
We shall not give here the complete resurgent solution of the problem of
analytic classification; the reader is referred to \cite{irma} for this and for
the comparison with Martinet-Ramis's solution \cite{MR}.


\section{Formal separatrix, formal integral}


Observe that our foliations always have an analytic separatrix (a leaf passing
through the origin), namely the curve $\{x=0\}$ in Dulac coordinates.
For the vector field~$X$, this corresponds to the solution
$z\mapsto(0,u\ee^z)$
(with an arbitrary constant of integration $u\in\C$).

The ``formal curve'' $\{y=\ph_0(x)\}$ is to be considered as a ``formal
separatrix'' of our foliation (and as a formal centre manifold of~$X$), since
it is the image by~$\th$ of $\{y=0\}$ which is the other separatrix of the
normal form.
A time-parametrisation of the corresponding integral curve of~$X_0$ is
$z \mapsto (x,y)=(-1/z,0)$.
The formal series~$\ph_0$ can thus be obtained by setting $\wt\ph_0(z) =
\ph_0(-1/z)$ and looking for a formal solution $z \mapsto
\big(-1/z,\wt\ph_0(z)\big)$ of~$X$, 
\ie\ the formal series~$\wt\ph_0$ must solve the non-linear differential equation
\begin{equation}	\label{eqdiffeqX}
\frac{\dd\wt Y}{\dd z} = A(-1/z,\wt Y).
\end{equation}
More generally, if we set
\begin{equation}	\label{eqdefYzu}
\wt Y(z,u) = u \,  \ee^z + \sum_{n\ge0} u^n \ee^{nz} \wt\ph_n(z), \qquad 
\wt\ph_n(z) = \ph_n(-1/z) \in z\ii\C[[z\ii]],
\end{equation}
we get what \'Ecalle calls a ``formal integral'' of~$X$, 
a formal object containing a free parameter and solving~\eqref{eqdiffeqX}
(this is equivalent to finding a formal transformation of the
form~\eqref{eqdefthph} which solves~\eqref{eqconjugeq}).
One can find the formal series~$\wt\ph_n$ by solving the ordinary differential
equations obtained by expanding~\eqref{eqdiffeqX} in powers of~$u$.

The simple and famous example of Euler's equation, for which $A(x,y)=x+y$, shows
that the above formal series can be divergent.
Indeed, the equation for~$\ph_0(x)$ is then
$x^2\frac{\dd\ph_0}{\dd x} = x + \ph_0$
and one finds $\ph_0(x) = - \sum_{n\ge1} (n-1)! x^n$.
Since the equation is affine, there are no other non-trivial series in this
case:
$\ph(x,y)$ boils down to $y+\ph_0(x)$.

It is not by studying the differential equation~\eqref{eqdiffeqX} that we shall
get information on the formal transformation~$\th$,
but rather by directly working on the conjugacy equation~\eqref{eqdefthph}.


\section{The formal normalisation as an operator}


Let $\gA=\C[[x,y]]$ and let~$\nu$ denote the standard valuation
(thus $\nu(f)\in\N$ is the ``order'' of~$f$, with the convention that
$\nu(x^m y^n)=m+n$).
We denote by~$\gM$ the maximal ideal of~$\gA$, consisting of all formal series
without constant term, \ie\ $\gM=\ao f\in\gA\mid \nu(f)\in\N^*\af$.

We are given analytic vector fields~$X$ and~$X_0$ as in~\eqref{defXX}, which are
operators of the $\C$-algebra~$\gA$, more precisely $\C$-derivations
(in fact, they are derivations of~$\C\{x,y\}$, but we begin by forgetting analyticity).
We shall look for the formal normalisation~$\th$ through its ``substitution
operator'', which is the operator $\Th\in\End_\C\gA$ defined by
\begin{equation}	\label{eqdefThth}
\Th f = f\circ\th, \qquad f\in\gA.
\end{equation}

There is in fact a one-to-one correspondence beween ``substituable'' pairs of formal
series, \ie\ pairs $\th=(\th_1,\th_2)\in\gM\times\gM$, and operators of~$\gA$
which are \emph{formally continuous algebra homomorphisms}, \ie\ operators
$\Th\in\End_\C\gA$ such that $\Th(fg)=(\Th f)(\Th g)$ for any $f,g\in\gA$ and 
$\nu(\Th f_n)\to\infty$ for any sequence $(f_n)$ of~$\gA$ such that
$\nu(f_n)\to\infty$
(the proof is easy, see \eg\ \cite{irma}; one goes from~$\Th$ to~$\th$ simply by
setting $\th_1=\Th x$ and $\th_2=\Th y$).

Equation~\eqref{eqconjugeq} can be written
$$
(X f)\circ\th = X_0(f\circ\th), \qquad f\in\gA,
$$
and thus rephrased in terms of the substitution operator~$\Th$ as
\begin{equation}	\label{eqconjugOp}
\Th X = X_0 \Th.
\end{equation}
\emph{Thus, looking for a formal invertible transformation solution
of the conjugacy equation~\eqref{eqconjugeq} is equivalent to looking for a
formally continuous algebra automorphism solution of~\eqref{eqconjugOp}}.


\section{The formal normalisation as a mould expansion}	\label{secnormalmdlexp}


The general strategy for finding a normalising operator by mould calculus
consists in constructing it from the ``building blocks'' of the object one
wishes to normalise.
This implies that we shall restrict our attention to those operators of~$\gA$
which are obtained by combining the homogeneous components of~$X$ in all
possible ways\ldots

Since we are interested in fibred transformations, it is relevant to
consider the homogeneous components of~$X$ relatively to the variable~$y$ only
and to view $\bA = \C[[x]]$ as a ring of scalars;
we thus write
\begin{equation}	\label{eqdefBn}
X = X_0 + \sum_{n\in\cN} a_n B_n, \qquad
B_n = y^{n+1} \frac{\pa\,}{\pa y}, 
\end{equation}
with coefficients $a_n\in\bA$ stemming from the Taylor expansion
\begin{equation}	\label{eqdefiancN}
A(x,y) = y + \sum_{n\in\cN} a_n(x) y^{n+1}, \qquad
\cN = \ao n\in\Z \mid n \ge -1 \af.
\end{equation}
Homogeneity here means that each $a_n B_n$ sends $\bA y^k$ in $\bA
y^{k+n}$ for all $k\in\N$: the component $a_n B_n$ is homogeneous of degree~$n$
(this was the reason for shifting the index~$n$ by one unit in the Taylor
expansion), while $X_0$ is homogeneous of degree~$0$.

\emph{We shall look for a solution of~\eqref{eqconjugOp} among all the
operators of the form
\begin{equation}	\label{eqfirstmouldexp}
\Th = \sum_{r\ge0} \sum_{n_1,\dotsc,n_r\in\cN}
\cV^{n_1,\dotsc,n_r} B_{n_r}\dotsm B_{n_1}.
\end{equation}
Here $(\cV^{n_1,\dotsc,n_r})$ is a collection of coefficients in~$\bA$, to be
chosen in such a way that
formula~\eqref{eqfirstmouldexp} has a meaning as a formally continuous operator of~$\gA$ and defines an
automorphism solving~\eqref{eqconjugOp}.}
The above summation is better understood as a summation over~$\cN^\bul$, the
free monoid consisting of all words~$\omb$ (of any length~$r$) the letters of
which are taken in the alphabet~$\cN$.
We thus set
\begin{equation}	\label{eqdefbBn}
\bB_\omb = B_{n_r}  \dotsm B_{n_1}, \qquad
\omb = (n_1,\dotsc,n_r) \in \cN^\bul,
\end{equation}
and $\bB_\est=\ID$ for the only word of zero length ($\omb=\est$),
and rewrite formula~\eqref{eqfirstmouldexp} as
\begin{equation}	\label{eqmouldexponoid}
\Th = \sum_{\omb\in\cN^\bul} \cV^\omb \bB_\omb.
\end{equation}
An operator~$\Th$ defined by such a formula is called a mould expansion, or a mould-comould
contraction.
Here the \emph{comould} is the map 
$$
\omb\in\cN^\bul \mapsto \bB_\omb \in \End_\C\gA,
$$
that we decided to define from the homogeneous components of~$X$,
and the \emph{mould} is the map
$$
\omb\in\cN^\bul \mapsto \cV^\omb \in \bA,
$$
that we must find so as to satisfy the aforementioned requirements.




\section{The general framework for mould-comould contractions}	\label{secgnal}


The general theory of moulds and comoulds requires:
\begin{enumerate}[--]
\item an alphabet~$\cN$, which is simply a non-empty set, often with a structure of
commutative semigroup (since it appears in practice as a set of possible
degrees of homogeneity); 
\item a commutative $\C$-algebra~$\bA$ (the unit of which we denote by~$1$), in
which moulds take their values;
\item an $\bA$-algebra~$\gF$ (the unit of which we denote by~$\ID$), possibly
non-commutative, in which comoulds take their values.
We also assume that a \emph{complete ring pseudovaluation}
$\val\colon\gF\to\Z\cup\{\infty\}$ is given.\footnote{
This means that $\vl{\Th}=\infty \!\Longleftrightarrow\! \Th=0, \ens
\vl{\Th_1-\Th_2} \ge \min \big\{\vl{\Th_1},\vl{\Th_2}\big\}$, 
$\vl{\Th_1\Th_2} \ge \vl{\Th_1} + \vl{\Th_2}$
and the distance $(\Th_1,\Th_2) \mapsto 2^{-\vl{\Th_2-\Th_1}}$ is complete.
}
\end{enumerate}

In the saddle-node case we can choose for~$\gF$ a certain $\bA$-subalgebra of $\End_\bA\gA$,
with $\bA=\C[[x]]$ and $\gA=\bA[[y]]=\C[[x,y]]$ (the fact that we deal with
operators which commute with the multiplication by an element of~$\bA$ reflects
the fibred character over~$x$ of the situation),
defined as the set $\gF_{\bA,\nu}$ of operators admitting a valuation \wrt\ the valuation~$\nu$
of~$\gA$, \ie\ the set of all $\Th\in\End_\bA\gA$ for which there exists
$\de\in\Z$ such that
$\nu(\Th f)\ge \nu(f)+\de$ for all $f\in\gA$.
This way, the valuation\footnote{	\label{footval}
For technical reasons, rather than the standard valuation on~$\C[[x,y]]$, we
shall use another monomial valuation, defined by
$\nu(x^m y^n)=4m+n$ (see Section~\ref{secformsumm}).
}~$\nu$ of~$\gA$ induces a complete ring
pseudovaluation~$\val$ on~$\gF_{\bA,\nu}$,
namely $\vl{\Th} = \inf_{f\in\gA\setminus\{0\}} \{ \nu(\Th f) - \nu(f) \}$.

One can thus safely speak of ``formally summable'' families in~$\gF$:
for instance, if for any $\de\in\Z$ the set $\ao \omb\in\cN^\bul \mid
\vl{\cV^\omb \bB_\omb}\le\de \af$
is finite, then the family $(\cV^\omb \bB_\omb)$ is formally summable and
formula~\eqref{eqmouldexponoid} defines an element of~$\gF$.
We repeat our definitions in this general context:
\begin{enumerate}[--]
\item a mould is any map $\cN^\bul \to \bA$
(we usually denote by~$M^\bul$ the mould whose value on the word~$\omb$ is~$M^\omb$),
\item a comould is any map $\cN^\bul \to \gF$
(we usually denote by~$\bB_\bul$ the comould whose value on the word~$\omb$ is~$\bB_\omb$),
\item a mould expansion is the result of the contraction of a mould~$M^\bul$ and a
comould~$\bB_\bul$ such that the family $(M^\omb\bB_\omb)_{\omb\in\cN^\bul}$ is
formally summable in~$\gF$;
we usually use the short-hand notation 
$$
\Th = \sum M^\bul \bB_\bul.
$$
\end{enumerate}

The monoid law in~$\cN^\bul$ is the concatenation, denoted by~$\conc$\,, which
allows us to define \emph{mould multiplication} by the formula
$$
P^\bul = M^\bul \times N^\bul \colon \;
\omb \mapsto P^\omb = \sum_{\omb= \omb^1\!\conc\omb^2} M^{\omb^1} N^{\omb^2}.
$$
The space of moulds is in fact an $\bA$-algebra.
Correspondingly, if a comould~$\bB_\bul$ is \emph{mutiplicative}, in the sense
that
$\bB_{\omb^1\!\conc\omb^2} = \bB_{\omb^2}  \bB_{\omb^1}$ for any
$\omb^1,\omb^2\in\cN^\bul$
(as is obviously the case for a comould defined as in~\eqref{eqdefbBn}), then
$$
\sum \left(M^\bul  \times N^\bul \right) \bB_\bul = 
\left( \sum N^\bul \bB_\bul \right)\left( \sum M^\bul \bB_\bul \right)
$$
as soon as both expressions in the \rhs\ are formally summable.

One can easily check that a mould~$M^\bul$ has a multiplicative inverse if and
only if~$M^\est$ is invertible in~$\bA$.
In this case, if $(M^\omb\bB_\omb)$ is formally summable and $\bB_\bul$ is multiplicative,
then $\sum M^\bul \bB_\omb$ is invertible in~$\gF$.

We do not develop farther the theory here and prefer to return to the formal
normalisation of our saddle-node singularity~$X$.


\section{Solution of the formal conjugacy problem}	\label{secsolconj}


Let us use the $\bA$-algebra $\gF = \gF_{\bA,\nu}$ defined in the previous
section,
to which each operator $B_n = y^{n+1}\pa_y$ obviously belongs
($B_n$ is $\bA$-linear and $\vl{B_n}=n$),
and thus also the $\bB_\omb$'s defined by~\eqref{eqdefbBn}.
Formula~\eqref{eqdefBn} can be written $X-X_0 = \sum J_a^\bul \bB_\bul$, with
\begin{equation}	\label{eqdefJa}
J_a^\omb = \left| \begin{aligned}
a_{n_1} \quad & \text{if $\omb = (n_1)$}\\
0 \ens\; \quad & \text{if $r(\omb)\neq1$}
\end{aligned}  \right.
\end{equation}
and the conjugacy equation~\eqref{eqconjugOp} is equivalent to
\begin{equation}	\label{eqThXXz}
\Th (X-X_0) = [X_0,\Th].
\end{equation}

\begin{lemma}
Let $\cV^\bul$ be a mould such that the family $(\cV^\omb\bB_\omb)$ is formally
summable in~$\gF$. 
Then 
$$
[X_0,\sum \cV^\bul \bB_\bul] = \sum (x^2\pa_x \cV^\bul + \na \cV^\bul) \bB_\bul,
$$
where the mould $\na \cV^\bul$ is defined by $\na \cV^\est=0$ and
$$
\na \cV^\omb = (n_1+\dotsb+n_r) \cV^\omb
$$
for $\omb = (n_1,\dotsc,n_r)$ non-empty.
\end{lemma}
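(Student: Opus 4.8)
The plan is to compute the $\C$-linear operator $\ad(X_0)\colon\Th\mapsto[X_0,\Th]$ on each elementary term $\cV^\omb\bB_\omb$ and then re-sum, once the termwise manipulation has been justified. First I would secure this legitimacy. Although $X_0\notin\gF$ (it does not commute with multiplication by~$x$), it does not lower the valuation: with the monomial valuation $\nu(x^my^n)=4m+n$, the summand $x^2\pa_x$ raises~$\nu$ by~$4$ while $y\pa_y$ preserves it, so $\nu(X_0 f)\ge\nu(f)$ for every $f\in\gA$. Writing $[X_0,\Th]f=X_0(\Th f)-\Th(X_0 f)$ and using $\nu(X_0 g)\ge\nu(g)$ together with $\nu(\Th g)\ge\nu(g)+\vl{\Th}$ for both terms, one gets $\nu\big([X_0,\Th]f\big)\ge\nu(f)+\vl{\Th}$, hence $\vl{[X_0,\Th]}\ge\vl{\Th}$. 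Thus $\ad(X_0)$ is non-expanding, hence continuous for the $\val$-adic distance, and commutes with formally summable sums; moreover the image family $\big([X_0,\cV^\omb\bB_\omb]\big)_\omb$ inherits formal summability, so the \rhs\ makes sense.

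Next I would treat a single word $\omb=(n_1,\dotsc,n_r)$ by the Leibniz rule for brackets, $[X_0,\cV^\omb\bB_\omb]=[X_0,\cV^\omb]\,\bB_\omb+\cV^\omb\,[X_0,\bB_\omb]$, where $\cV^\omb$ stands for multiplication by the corresponding element of~$\bA=\C[[x]]$. The first bracket is immediate: for any~$f$ one has $[X_0,\cV^\omb]f=(X_0\cV^\omb)f$, and since $\cV^\omb$ depends on~$x$ only, $X_0\cV^\omb=x^2\pa_x\cV^\omb$; so $[X_0,\cV^\omb]\,\bB_\omb=(x^2\pa_x\cV^\omb)\,\bB_\omb$, which produces the mould $x^2\pa_x\cV^\bul$.

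The content of the lemma is concentrated in the second bracket, and the key computation is $[X_0,B_n]=n\,B_n$. Here $x^2\pa_x$ commutes with $B_n=y^{n+1}\pa_y$ (disjoint variables), so only $y\pa_y$ contributes, and a one-line computation gives $[y\pa_y,\,y^{n+1}\pa_y]=n\,y^{n+1}\pa_y$; conceptually, $B_n$ is an eigenvector of $\ad(X_0)$ with eigenvalue equal to its degree of homogeneity~$n$ in~$y$. Applying Leibniz once more along the product $\bB_\omb=B_{n_r}\dotsm B_{n_1}$ and pulling out each scalar~$n_i$ yields $[X_0,\bB_\omb]=(n_1+\dotsb+n_r)\,\bB_\omb$, \ie\ $\cV^\omb[X_0,\bB_\omb]=(\na\cV^\omb)\,\bB_\omb$. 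Combining the two brackets gives $[X_0,\cV^\omb\bB_\omb]=(x^2\pa_x\cV^\omb+\na\cV^\omb)\,\bB_\omb$, and summing over $\omb\in\cN^\bul$ concludes.

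I expect no genuine obstacle in the algebra, which is routine bracket manipulation resting on the single identity $[X_0,B_n]=n\,B_n$; the one point requiring care is the continuity argument of the first paragraph, needed to justify applying $\ad(X_0)$ term by term to the (only formally summable, not finite) family.
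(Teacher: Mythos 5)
Your proof is correct and follows essentially the same route as the paper's: split $X_0=x^2\pa_x+y\pa_y$, observe that $x^2\pa_x$ commutes with the $B_n$'s and acts only on the coefficients $\cV^\omb\in\bA$, identify the bracket with $y\pa_y$ as multiplication by the total degree $n_1+\dotsb+n_r$, and pass to the infinite sum by continuity. The only variations are minor: the paper obtains $[y\pa_y,\bB_\omb]=(n_1+\dotsb+n_r)\bB_\omb$ from the general fact that $[y\pa_y,\Th]=n\Th$ for any $\bA$-linear operator homogeneous of degree~$n$ (checked on monomials $y^k$), whereas you compute $[X_0,B_n]=nB_n$ letter by letter and extend via the derivation property of $\ad(X_0)$; and your first paragraph makes explicit the valuation estimate $\vl{[X_0,\Th]}\ge\vl{\Th}$ behind the paper's terse closing appeal to \emph{formal continuity} --- a point the paper leaves to the reader.
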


\begin{proof}
The operator $\bB_{n_1,\dots,n_r}$ is homogeneous of degree $n_1+\dotsb+n_r$.
One can check that, if $\Th\in\End_\bA(\bA[[y]])$ is homogeneous of
degree $n\in\Z$, then
$$
[ y\pa_y, \Th ] = n \Th.
$$
Indeed, by $\bA$-linearity and formal continuity, it is sufficient to check that both operators
act the same way on a monomial~$y^k$; 
but
$\Th y^k = \be_{k} y^{k+n}$ with a $\be_{k} \in \bA$,
thus $y\pa_y \Th y^k = (k+n) \be_{k}  y^{k+n} 
= (k+n) \Th y^k$ 
while $\Th y\pa_y y^k = k \Th y^k$.

Since $X_0 = x^2\pa_x + y\pa_y$ and
$x^2\pa_x$ commutes with the $B_n$'s, it follows that
$$
[ X_0, \cV^\omb\bB_\omb ] = \big(x^2\pa_x\cV^\omb+ (n_1+\dotsb+n_r)\cV^\omb) \bB_\omb, \qquad
\omb = (n_1,\dotsc,n_r) \in \cN^\bul.
$$
The conlusion follows by formal continuity.
\end{proof}

Looking for a solution of the form $\Th = \sum \cV^\bul \bB_\bul$ for
equation~\eqref{eqThXXz}, we are thus led to the mould equation
\begin{equation}	\label{eqmldeq}
x^2 \pa_x \cV^\bul + \na\cV^\bul = J_a^\bul \times \cV^\bul.
\end{equation}

\begin{lemma}	\label{lemdefcV}
Equation~\eqref{eqmldeq} has a unique solution~$\cV^\bul$ such that $\cV^\est=1$
and $\cV^\omb\in x\C[[x]]$ for every non-empty $\omb\in\cN^\bul$.
Moreover, 
\begin{equation}	\label{eqvalcV}
\cV^{n_1,\dotsc,n_r} \in x^{\ceil{r/2}}\C[[x]],
\end{equation}
where $\ceil{s}$ denotes, for any $s\in\R$, the least integer not smaller than~$s$.
\end{lemma}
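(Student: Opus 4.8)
The plan is to read the mould equation~\eqref{eqmldeq} one word at a time and to solve the resulting system by induction on the word length $r=r(\omb)$. The first step is to exploit the very special shape of $J_a^\bul$: since $J_a^\omb$ vanishes unless $r(\omb)=1$, the only splitting contributing to the mould product is $\omb=(n_1)\conc\omb'$, so that for $\omb=(n_1,\dotsc,n_r)$ one has $(J_a^\bul\times\cV^\bul)^\omb=a_{n_1}\cV^{\omb'}$ with $\omb'=(n_2,\dotsc,n_r)$. Combining this with $\na\cV^\omb=(n_1+\dotsb+n_r)\cV^\omb$, equation~\eqref{eqmldeq} becomes the triangular recursion
$$\big(x^2\pa_x+\sigma_\omb\big)\cV^\omb=a_{n_1}\cV^{\omb'},\qquad \sigma_\omb:=n_1+\dotsb+n_r,$$
while the empty word gives $x^2\pa_x\cV^\est=0$, forcing $\cV^\est$ constant, which we fix to be $1$. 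Each equation produces the length-$r$ coefficient from the strictly shorter word $\omb'$.

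The heart of the matter is the operator $L_m:=x^2\pa_x+m$ on $\C[[x]]$, for $m\in\Z$. A short computation on coefficients shows that, restricted to $x\C[[x]]$, it is an order-preserving linear bijection when $m\neq0$, whereas $L_0=x^2\pa_x$ is injective with image exactly $x^2\C[[x]]$, its inverse lowering the order in $x$ by one. I would then record the two consequences of~\eqref{eqassA}: from $A(0,y)=y$ every $a_n$ lies in $x\C[[x]]$, and the last normalisation condition in~\eqref{eqassA} yields the sharper $a_0\in x^2\C[[x]]$. Existence and uniqueness now follow by induction: given $\cV^{\omb'}\in x\C[[x]]$ (or $\cV^{\omb'}=1$ for $r=1$), the right-hand side $a_{n_1}\cV^{\omb'}$ lies in $x\C[[x]]$; if $\sigma_\omb\neq0$ we simply invert $L_{\sigma_\omb}$, while if $\sigma_\omb=0$ we must check that $a_{n_1}\cV^{\omb'}$ lies in $\operatorname{im}L_0=x^2\C[[x]]$ --- which holds because either $n_1=0$ (then $a_0\in x^2\C[[x]]$) or $r\ge2$ (then $a_{n_1}\cV^{\omb'}$ is a product of two elements of $x\C[[x]]$). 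Injectivity of $L_m$ on $x\C[[x]]$ in every case gives uniqueness within the prescribed class.

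For the estimate~\eqref{eqvalcV} I would propagate the order in $x$ along the recursion: multiplication by $a_{n_1}$ raises it by at least $1$ (by at least $2$ if $n_1=0$), applying $L_{\sigma_\omb}\ii$ preserves it when $\sigma_\omb\neq0$ and decreases it by exactly $1$ when $\sigma_\omb=0$. Thus a step gains a full unit whenever $\sigma_\omb\neq0$, and also when $\sigma_\omb=0$ with $n_1=0$, but possibly nothing at a degenerate step with $\sigma_\omb=0$ and $n_1\neq0$. The observation that rescues the bound is that degenerate steps cannot occur consecutively: if $\sigma_\omb=0$ and $n_1\neq0$ then $\sigma_{\omb'}=\sigma_\omb-n_1=-n_1\neq0$, so the coefficient one level down is produced with no loss of order. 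Concretely I would run a strengthened induction carrying, besides $\cV^\omb\in x^{\ceil{r/2}}\C[[x]]$, the improvement $\cV^\omb\in x^{\ceil{r/2}+1}\C[[x]]$ whenever $r$ is even and $\sigma_\omb\neq0$; the three cases ($\sigma_\omb\neq0$; $\sigma_\omb=0,\,n_1=0$; $\sigma_\omb=0,\,n_1\neq0$) then close, the last one feeding on the strengthened bound for $\cV^{\omb'}$ granted by $\sigma_{\omb'}\neq0$.

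The main obstacle is precisely this degenerate case for odd $r$: there the one-step induction loses a half-unit, and one must look one level deeper, crucially using that vanishing total degree together with $n_1\neq0$ forces the immediately shorter word to have nonzero total degree, so that no order is lost at the following level. Everything else is routine bookkeeping on power series.
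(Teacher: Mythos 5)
Your proof is correct and is essentially the paper's own argument: the paper runs the same triangular recursion (induced by the sparseness of $J_a^\bul$) after the change of variable $z=-1/x$, under which your operator $L_m=x^2\pa_x+m$ becomes $\frac{\dd\,}{\dd z}+m$, with the same trichotomy ($\sigma_\omb\neq0$; $\sigma_\omb=0$ with $n_1=0$, using $a_0\in x^2\C[[x]]$; the degenerate case $\sigma_\omb=0$, $n_1\neq0$) and the same key fact that degenerate steps cannot occur consecutively. Your strengthened induction is just a repackaging of the paper's count $\card\cR^\omb\ge\ceil{r/2}$, where $\cR^\omb$ is the set of indices at which a full unit of valuation is gained.
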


\begin{proof}
Let us perform the change of variable $z=-1/x$ and set
$\pa = \frac{\dd\,}{\dd z}$ and $\wt a_n(z) = a_n(-1/z)$.
Observe that
\begin{equation}	\label{eqassan}
\wt a_n \in z\ii\C[[z\ii]], \qquad \wt a_0 \in z^{-2}\C[[z\ii]],
\end{equation}
as a consequence of~\eqref{eqassA}.

The equation for
$\wt\cV^\omb(z) = \cV^\omb(-1/z)$, with $\omb = (n_1,\dotsc,n_r)$, $r\ge1$, is
\begin{equation}	\label{eqdefwtcV}
(\pa+ n_1 + \dotsb + n_r) \wt\cV^{n_1,\dotsc,n_r}
= \wt a_{n_1} \wt\cV^{n_2,\dotsc,n_r}.
\end{equation}
On the one hand, $\pa+ \mu$ is an invertible operator of $\C[[z\ii]]$
for any $\mu\in\C^*$ and the inverse operator
\begin{equation}	\label{eqdefinvpamu}
(\pa+ \mu)\ii = \sum_{r\ge0} \mu^{-r-1}(-\pa)^r
\end{equation} 
leaves $z\ii\C[[z\ii]]$ invariant;
on the other hand, when $\mu=0$, $\pa$ induces an isomorphism 
$z^{-2}\C[[z\ii]] \to z\ii\C[[z\ii]]$.

For $r=1$, equation~\eqref{eqdefwtcV} has a unique solution in $z\ii\C[[z\ii]]$, 
because the \rhs\ is~$\wt a_{n_1}$, element of $z\ii\C[[z\ii]]$, and even of
$z^{-2}\C[[z\ii]]$ when $n_1=0$.
By induction, for $r\ge2$, we get a \rhs\ in $z^{-2}\C[[z\ii]]$ and a unique solution
$\wt\cV^\omb$ in $z\ii\C[[z\ii]]$ for $\omb=(n_1,\dotsc,n_r)\in\cN^r$.
Moreover, with the notation $`\omb = (n_2,\dotsc,n_r)$, we have
$$
v(\wt\cV^\omb) \ge \al^\omb + v(\wt\cV^{`\omb}), \qquad 
\text{with}\;
\al^\omb = \left| \begin{aligned} 
0 \quad &\text{if $n_1+\dotsb+n_r=0$ and $n_1\neq0$,} \\
1 \quad &\text{if $n_1+\dotsb+n_r\neq0$ or $n_1=0$,} 
\end{aligned}  \right.
$$
where $v$ denotes the standard valuation of $\C[[z\ii]]$.
Thus $v(\wt\cV^\omb) \ge \card \cR^\omb$, with 
$\cR^\omb = \ao i\in[1,r] \mid n_i+\dotsb+n_r\neq0 \;\text{or}\; n_i=0
\af$
for $r\ge1$.

Let us check that $\card\cR^\omb \ge \ceil{r/2}$.
This stems from the fact that if $i\not\in\cR^\omb$, $i\ge2$, then $i-1\in\cR^\omb$
(indeed, in that case $n_{i-1}+\dotsb+n_r = n_{i-1}$),
and that $\cR^\omb$ has at least one element, namely~$r$.
The inequality is thus true for $r=1$ or~$2$; by induction, if $r\ge3$, then
$\cR^\omb \cap [3,r] = \cR^{``\omb}$ with $``\omb = (n_3,\dotsc,n_r)$ and
either $2\in \cR^\omb$, or $2\not\in\cR^\omb$ and $1\in\cR^\omb$, 
thus $\card\cR^\omb \ge 1 + \card\cR^{``\omb}$.
\end{proof}

The formal summability of the family $(\cV^\omb \bB_\omb)$ follows
from~\eqref{eqvalcV} if we use the modified valuation
of footnote~\ref{footval}\label{secformsumm}.
Indeed, one gets 
$\vl{\cV^{n_1,\dotsc,n_r} \bB_{n_1,\dotsc,n_r}}
\ge n_1+\dotsb+n_r + 2r$.
The $n_i$'s may be negative but they are always $\ge-1$, thus
$n_1+\dotsb+n_r + r\ge0$.
Therefore, for any $\de>0$, the condition $n_1+\dotsb+n_r + 2r \le \de$
implies $r\le \de$ and 
$\sum (n_i+1) = n_1+\dotsb+n_r + r \le \de$.
Since this condition is fulfilled only a finite number of times, the summability
follows.

\emph{Setting $\Th = \sum \cV^\bul \bB_\bul$,
we thus get a solution of the conjugacy equation~\eqref{eqconjugOp} and $\Th$ is a
continuous operator of~$\gA$.}

But is~$\Th$ an algebra automorphism?
If one takes for granted the existence of a unique~$\th$ of the
form~\eqref{eqdefthph} which solves~\eqref{eqconjugeq} (and this is not hard to
check), then one can easily identify the operator~$\Th$ that we just defined
with the substitution operator corresponding to~$\th$ and $\Th$ is thus an algebra
automorphism. 
But it is possible to prove directly this fact by checking a certain
symmetry property of the mould~$\cV^\bul$, called \emph{symmetrality}.


\section{Cosymmetrality and symmetrality}


Let us return for a while to the general context of Section~\ref{secgnal}, with
an alphabet~$\cN$ and a commutative $\C$-algebra~$\bA$, focusing on the case
where~$\bB_\bul$ is the multiplicative comould generated by a family of
$\bA$-linear derivations $(B_n)_{n\in\cN}$ of a commutative algebra~$\gA$.

We thus assume that $\gA$ is a commutative $\bA$-algebra, on which a complete ring
pseudovaluation~$\nu$ is given, that $\gF = \gF_{\bA,\nu}$ and that, 
for each $n\in\cN$, we are given $B_n\in\gF$ satisfying the Leibniz rule
$$
B_n(fg) = (B_n f)g + f (B_n g), \qquad f,g\in\gA.
$$
This property can be rewritten
$$
\sig(B_n) = B_n\otimes\ID + \ID\otimes B_n,
$$
with the notation $\sig \colon \End_\bA \gA \to \Bili(\gA\times\gA,\gA)$ for the
composition with the multiplication of~$\gA$ 
and $\Th_1\otimes\Th_2(f,g) := (\Th_1 f)(\Th_2 g)$ for any $\Th_1,\Th_2\in\End_\bA\gA$.

We now consider the comould defined by
$$
\bB_\est = \ID, \qquad
\bB_\omb = B_{n_r}  \dotsm B_{n_1} \quad
\text{for non-empty $\omb = (n_1,\dotsc,n_r) \in \cN^\bul$.}
$$
One can easily check, by iteration of the Leibniz rule, that
\begin{gather*}
\sig\big(\bB_{(n_1,n_2)}\big) = 
\bB_{(n_1,n_2)}\otimes \bB_\est
+ \bB_{(n_1)} \otimes \bB_{(n_2)}
+ \bB_{(n_2)} \otimes \bB_{(n_1)}
+ \bB_\est\otimes \bB_{(n_1,n_2)}, \\[1.5ex]
\begin{split}
\sig\big(\bB_{(n_1,n_2,n_3)}\big) = 
&\bB_{(n_1,n_2,n_3)}\otimes \bB_\est\\
&\quad 
+ \bB_{(n_1,n_2)} \otimes \bB_{(n_3)}
+ \bB_{(n_1,n_3)} \otimes \bB_{(n_2)}
+ \bB_{(n_2,n_3)} \otimes \bB_{(n_1)}\\
&\quad
+ \bB_{(n_3)} \otimes \bB_{(n_1,n_2)} 
+ \bB_{(n_2)} \otimes \bB_{(n_1,n_3)} 
+ \bB_{(n_1)} \otimes \bB_{(n_2,n_3)} \\
& \hspace{19em} + \bB_\est\otimes \bB_{(n_1,n_2,n_3)},
\end{split}
\end{gather*}
the general formula being
\begin{equation}	\label{eqmotivsh}
\sig(\bB_\omb) = \sum_{\omb^1,\omb^2\in\cN^\bul} \sh{\omb^1}{\omb^2}{\omb}
\bB_{\omb^1} \otimes \bB_{\omb^2},
\end{equation}
where $\tsh{\omb^1}{\omb^2}{\omb}$ denotes the number of obtaining~$\omb$ by
\emph{shuffling} of $\omb^1$ and~$\omb^2$:
it is the number of permutations~$\sig$ such that one can write
$\omb^1 = (\om_1,\dotsc,\om_\ell)$,
$\omb^2 = (\om_{\ell+1},\dotsc,\om_r)$ and
$\omb = (\om_{\sig(1)},\dotsc,\om_{\sig(r)})$ 
with the property
$\sig(1)<\dotsm<\sig(\ell)$ and $\sig(\ell+1)<\dotsm<\sig(r)$
(thus it is non-zero only if $\omb$ can be obtained by interdigitating the letters
of~$\omb^1$ and those of~$\omb^2$ while preserving their internal order
in~$\omb^1$ or~$\omb^2$).

Any comould satisfying~\eqref{eqmotivsh} is said to be \emph{cosymmetral}.

Dually, one says that a mould~$M^\bul$ is \emph{symmetral} if and only if
$M^\est = 1$ and, for any two non-empty words $\omb^1, \omb^2$,
\begin{equation}	\label{eqdefsymal}
\sum_{\omb\in\cN^\bul}  \sh{\omb^1}{\omb^2}{\omb} M^\omb = M^{\omb^1} M^{\omb^2}.
\end{equation}
One says that a mould~$M^\bul$ is \emph{alternal} if and only if
$M^\est = 0$ and, for any two non-empty words $\omb^1, \omb^2$, the above sum vanishes.

Suppose that $\bB_\bul$ is cosymmetral and that the family $(M^\omb\bB_\omb)$ is
formally summable, with $\Th = \sum M^\bul\bB_\bul$. 
It is easy to check that
\begin{align*}
M^\bul \ens\text{symmetral} &\quad\Rightarrow\quad
\sig(\Th) = \Th \otimes \Th, \\
M^\bul \ens\text{alternal} &\quad\Rightarrow\quad
\sig(\Th) = \Th \otimes \ID + \ID \otimes \Th.
\end{align*}
In other words, contraction with a symmetral mould yields an automorphism, while
contraction with an alternal mould yields a derivation.

Symmetral and alternal moulds satisfy many stability properties (see \cite{irma}
\S5). 
Here we just mention that the multiplicative inverse~$\wt M^\bul$ of a symmetral
mould~$M^\bul$ is the symmetral mould defined by
\begin{equation}	\label{eqdefinvsym}
\wt M^{n_1,\dotsc,n_r} = (-1)^r M^{n_r,\dotsc,n_1}.
\end{equation}

Since the multiplicative comould generated by a family of derivations is
cosymmetral, in the case of the saddle-node it is sufficient to check the
following lemma to prove that the~$\Th$ defined in Section~\ref{secsolconj} is
indeed an automorphism:

\begin{lemma}
The mould~$\cV^\bul$ defined by Lemma~\ref{lemdefcV} is symmetral.
\end{lemma}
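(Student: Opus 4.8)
The plan is to propagate the symmetrality relation through the same inductive/uniqueness scheme used for Lemma~\ref{lemdefcV}. First I would rewrite equation~\eqref{eqmldeq} as $\mathcal{D}\cV^\bul = J_a^\bul\times\cV^\bul$ with $\mathcal{D}=x^2\pa_x+\na$. Both pieces of $\mathcal{D}$ are derivations of the mould algebra $(\,\cdot\,,\times)$: $x^2\pa_x$ acts coefficientwise and is a derivation of~$\bA$, while $\na$ scales $M^\omb$ by $n_1+\dotsb+n_r$ and concatenation adds up letter-sums. Furthermore $J_a^\bul$ is \emph{alternal}, since it is supported on length-$1$ words whereas a shuffle of two non-empty words has length $\ge2$. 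Thus $\cV^\bul$ solves a linear mould equation driven by an alternal mould, which is precisely the configuration in which one expects the group-like (symmetral) character to be preserved, in analogy with ``$g'=gX$ with $X$ primitive forces $g$ group-like''. Concretely, for any two non-empty words $\omb^1,\omb^2$ I would compare
\[
S^{\omb^1,\omb^2} = \sum_{\omb} \tsh{\omb^1}{\omb^2}{\omb}\,\cV^\omb, \qquad
P^{\omb^1,\omb^2} = \cV^{\omb^1}\cV^{\omb^2};
\]
the symmetrality identity~\eqref{eqdefsymal} is exactly $S=P$, the condition $\cV^\est=1$ being already part of Lemma~\ref{lemdefcV}.

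The heart of the proof is to show that $S$ and $P$ obey one and the same recursion. Write $\omb^1=(m_1,\dotsc)$ and $\omb^2=(p_1,\dotsc)$, let $`\omb^1,`\omb^2$ be these words with their first letter removed, and let $\mu$ be the sum of all the letters of $\omb^1$ and~$\omb^2$ together. Written out, the defining equation for $\cV$ reads $x^2\pa_x\cV^\omb + (n_1+\dotsb+n_r)\cV^\omb = a_{n_1}\cV^{`\omb}$ for every non-empty $\omb$. For $P$, applying Leibniz to $x^2\pa_x$ and then this equation to each factor gives
\[
(x^2\pa_x+\mu)\,P^{\omb^1,\omb^2} = a_{m_1}\,P^{`\omb^1,\omb^2} + a_{p_1}\,P^{\omb^1,`\omb^2}.
\]
For $S$ I would use two properties of the shuffle: it preserves the total letter-sum, so $\na$ contributes the single scalar $\mu$ to every term of $S$; and the first letter of a shuffle of $\omb^1,\omb^2$ must be $m_1$ or $p_1$, giving the deconcatenation rule $\tsh{\omb^1}{\omb^2}{(n_1,`\omb)}=[n_1=m_1]\,\tsh{`\omb^1}{\omb^2}{`\omb}+[n_1=p_1]\,\tsh{\omb^1}{`\omb^2}{`\omb}$. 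Substituting $(J_a^\bul\times\cV^\bul)^\omb=a_{n_1}\cV^{`\omb}$ and splitting the sum according to the origin of the first letter yields the identical recursion
\[
(x^2\pa_x+\mu)\,S^{\omb^1,\omb^2} = a_{m_1}\,S^{`\omb^1,\omb^2} + a_{p_1}\,S^{\omb^1,`\omb^2}.
\]

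To conclude I would induct on $r(\omb^1)+r(\omb^2)$. When one word is empty the shuffle collapses and $S^{\est,\omb^2}=\cV^{\omb^2}=P^{\est,\omb^2}$ (using $\cV^\est=1$), which is the base case. In the inductive step both $S^{\omb^1,\omb^2}$ and $P^{\omb^1,\omb^2}$ lie in $x\C[[x]]$ (a shuffle of non-empty words is non-empty, and a product of two series in $x\C[[x]]$ stays there), and the right-hand sides of the two recursions agree by the induction hypothesis. Passing to $z=-1/x$, so that $x^2\pa_x$ becomes $\pa$, the operator $\pa+\mu$ is injective on $z\ii\C[[z\ii]]$ for every $\mu\in\C$ (invertible on all of $\C[[z\ii]]$ when $\mu\neq0$ by~\eqref{eqdefinvpamu}, and with kernel reduced to constants, hence to $0$ on $z\ii\C[[z\ii]]$, when $\mu=0$); therefore $S^{\omb^1,\omb^2}=P^{\omb^1,\omb^2}$, which is symmetrality. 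The main obstacle I anticipate is the bookkeeping in the middle step: proving the deconcatenation identity for shuffle coefficients and verifying that the scalars $a_{m_1}$ and $a_{p_1}$ reassemble correctly after reindexing. Everything else is either a standard derivation property or a verbatim reuse of the uniqueness argument from Lemma~\ref{lemdefcV}.
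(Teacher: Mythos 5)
Your proposal is correct and follows essentially the same route as the paper's proof: induction on $r(\omb^1)+r(\omb^2)$, applying $d+\norm{\omb^1}+\norm{\omb^2}$ with $d=x^2\pa_x$, using the mould equation~\eqref{eqmldeq} together with the splitting of the shuffle sum according to the origin of the first letter, and concluding by injectivity of $d+\mu$ on $x\C[[x]]$ (invertibility for $\mu\neq0$, trivial kernel for $\mu=0$). The only cosmetic difference is that you run two parallel recursions for $S$ and $P$ and compare them, whereas the paper transforms $(d+\mu)S$ into $(d+\mu)P$ in a single chain of equalities; the content is identical.
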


\begin{proof}
We must show that
\begin{equation}	\label{eqVsymal}
\cV^\alb \cV^\beb = \sum_{\gab\in\cN^\bul} \sh{\alb}{\beb}{\gab} \cV^\gab,
\qquad \alb,\beb\in\cN^\bul.
\end{equation}
Since $\cV^\est = 1$, this is obviously true for $\alb$ or $\beb = \est$.
We now argue by induction on $r = r(\alb) + r(\beb)$.
We thus suppose that $r\ge1$ and, without loss of generality, both of $\alb$ and
$\beb$ non-empty.
With the notations $d = x^2\frac{\dd\,}{\dd x}$,
$\norm{\alb}=\al_1+\dotsb+\al_{r(\alb)}$
and $\norm{\beb}=\be_1+\dotsb+\be_{r(\beb)}$,
we compute
\begin{multline*}
A := (d+\norm{\alb}+\norm{\beb}) \sum_\gab \tsh{\alb}{\beb}{\gab} \cV^\gab \\
= \sum_{\gab\neq\est} \tsh{\alb}{\beb}{\gab} (d+\norm{\gab}) \cV^\gab 
= \sum_{\gab\neq\est} \tsh{\alb}{\beb}{\gab} a_{\ga_1} \cV^{`\gab},
\end{multline*}
using the notations $\norm{\gab} = \ga_1+\dotsb+\ga_s$
and $`\gab = (\ga_2,\dotsc,\ga_s)$ 
for any non-empty
$\gab = (\ga_1,\dotsc,\ga_s)$
(with the help~\eqref{eqmldeq} for the last identity).
Splitting the last summation according to the value of~$\ga_1$, 
which must be~$\al_1$ or~$\be_1$, we get
$$
A = \sum_\deb \tsh{`\alb}{\beb}{\deb} a_{\al_1}  \cV^\deb
+ \sum_\deb \tsh{\alb}{`\beb}{\deb} a_{\be_1}  \cV^\deb
= a_{\al_1} \cV^{`\alb}  \cdot \cV^\beb 
+ \cV^{\alb}  \cdot a_{\be_1} \cV^{`\beb}
$$
(using the induction hypothesis), hence, using again~\eqref{eqmldeq},
$$
A = (d+\norm{\alb})\cV^\alb \cdot \cV^\beb 
+ \cV^\alb \cdot (d+\norm{\beb})\cV^\beb
= (d+\norm{\alb}+\norm{\beb}) (\cV^\alb \cV^\beb).
$$
We conclude that both sides of~\eqref{eqVsymal} must coincide, because
$d+\norm{\alb}+\norm{\beb}$ is invertible if $\norm{\alb}+\norm{\beb}\neq0$
and both of them belong to $x\C[[x]]$, thus even if $\norm{\alb}+\norm{\beb}=0$
the desired conclusion holds.
\end{proof}




\section{Resurgence of the formal conjugacy}


At this stage, we have found formal series $\cV^\omb\in\C[[x]]$ which determine
a formally continuous algebra automorphism $\Th=\sum\cV^\bul\bB_\bul$
conjugating~$X_0$ and~$X$.
Since $\Th x = x$, we deduce that $\Th$ is the substitution operator associated
with the formal transformation
$\th(x,y) = \big( x, \ph(x,y) \big)$ where $\ph = \Th y$.
This is what was announced in~\eqref{eqdefthph}. 

The components~$\ph_n(x)$ of~$\ph(x,y)$ are easily computed:
on checks by induction that
\begin{equation}	\label{eqdefbeomb}
\bB_\omb y = \be_\omb y^{n_1+\dotsb+n_r+1}, \qquad 
\omb=(n_1,\dotsc,n_r),\, r\ge 1,
\end{equation}
with $\be_\omb = 1$ if $r=1$,
$\be_\omb = (n_1+1)(n_1+n_2+1)\dotsm(n_1+\dotsb+n_{r-1}+1)$ if $r\ge2$;
one has $\be_\omb=0$ whenever $n_1+\dotsb+n_r \le -2$ (since \eqref{eqdefbeomb}
holds a priori in the fraction field $\C(\!(y)\!)$ but $\bB_\omb y$ belongs to~$\C[[y]]$), hence
\begin{equation}	\label{eqseriesgivphn}
\ph(x,y) = \Th y = y + \sum_{n\ge0} \ph_n(x) y^n, \qquad 
\ph_n = \sum_{\substack{r\ge1, \, \omb\in\cN^r  \\ n_1+\dotsb+n_r+1=n}} 
\be_\omb \cV^\omb
\end{equation}
(in the series giving $\ph_n$, there are only finitely many terms for each~$r$,
\eqref{eqvalcV} thus yields its formal convergence in $x\C[[x]]$).

Similarly, if we define $\cVt^\bul$ as the multiplicative inverse of~$\cV^\bul$
with the help of formula~\eqref{eqdefinvsym},
then $\Th\ii = \sum \cVt^\bul \bB_\bul$ is the substitution operator of
a formal transformation $(x,y) \mapsto \big(x,\psi(x,y)\big)$, which is nothing
but~$\th\ii$, and 
\begin{equation}	\label{eqpsiThii}
\psi(x,y) = \Th\ii y = y + \sum_{n\ge0} \psi_n(x) y^n,
\end{equation} 
where each coefficient can be represented as a formally convergent series
$\dst
\psi_n = \sum_{n_1+\dotsb+n_r+1=n}
\be_\omb \cVt^\omb$.

These are remarkably explicit formulas, quite different from what one would have
obtained by solving directly the differential equation~\eqref{eqdiffeqX}
for $\wt\ph_0(z) = \ph_0(-1/z)$ for instance.

An advantage of these formulas is that they allow to prove the resurgent
character \wrt\ the variable $z=-1/x$ of all the formal series which appear in
our problem.
We recall that a formal series
$$
\wt\ph(z) = \sum_{n\ge0} c_n z^{-n-1} \in z\ii\C[[z\ii]]
$$
is said to be resurgent if its formal Borel transform
$$
\wh\ph(\ze) = \sum_{n\ge0} c_n \frac{\ze^n}{n!} \in \C[[\ze]]
$$
has positive radius of convergence and defines a holomorphic function of~$\ze$
which admits an analytic continuation along all the paths starting in its
disc of convergence and lying in $\C\setminus\Z$
(see \cite{Eca81}, or \cite{kokyu}, or \cite{irma} \S8).
This property is stable by multiplication, the Borel transform of the Cauchy product
$\wt\ph \cdot\wt\psi$ being the convolution product $\wh\ph*\wh\psi$ defined by
\begin{equation}	\label{eqdefconvol}
(\wh\ph * \wh\psi)(\ze) = \int_0^\ze \wh\ph(\ze_1) \wh\psi(\ze-\ze_1)
\qquad \text{for $|\ze|$ small enough}
\end{equation}
(with $\wh\ph$ and~$\wh\psi$ denoting the Borel transforms of~$\wt\ph$ and~$\wt\psi$ respectively).

A simple example is provided by the formal series $\wt\cV^\omb(z) =
\cV^\omb(-1/z)$:
indeed, the Borel transforms of the convergent series $\wt a_n(z) = a_n(-1/z)$
are entire functions~$\wh a_n(\ze)$ and
the Borel counterpart of $\pa = \frac{\dd\,}{\dd z}$ is multiplication
by~$-\ze$, hence \eqref{eqdefwtcV} yields
\begin{gather*}
\wh\cV^{n_1}(\ze) = -\frac{1}{\ze-n_1} \wh a_{n_1}(\ze) \\[1ex]
\wh\cV^{n_1,n_2}(\ze) = \frac{1}{\ze-(n_1+n_2)} \big(\wh a_{n_1}*\wh\cV^{n_2} \big) 
\\[1ex]
\vdots \\[1ex]
\wh\cV^{n_1,\dotsc,n_r} = (-1)^r
\frac{1}{\ze-\htb n_1} \Big( \wh a_{n_1}  * 
\Big( \frac{1}{\ze-\htb n_2} \Big( \wh a_{n_2}  * 
\Big(  \dotsb 
\Big( \frac{1}{\ze-\htb n_r} \wh a_{n_r}
\Big) \dotsm \Big)\Big)\Big)\Big)
\end{gather*}
with $\htb n_i = n_i + \dotsb + n_r$.
Since the $\wh a_n$'s are entire functions of~$\ze$, the function
$\wh\cV^{n_1,\dotsc,n_r}$ is holomorphic with $\htb n_1,\dotsc,\htb n_r$ as only
possible singularities.

Moreover, the above formula is sufficiently explicit to make it possible to give
majorant series arguments so as to prove the uniform convergence of the series
of holomorphic functions
$$
\wh\ph_n = \sum_{n_1+\dotsb+n_r+1=n}
\be_\omb \wh\cV^\omb, \qquad
\wh\psi_n = \sum_{n_1+\dotsb+n_r+1=n}
\be_\omb \wh\cVt^\omb
$$
(\S8 of \cite{irma} is devoted to this task); in other words, \emph{the formal series $\ph_n$ and $\psi_n$
are resurgent \wrt\ $z=-1/x$}.


\section{Conclusion}


\parage
The reader is referred to \cite{Eca84} or \cite{irma} \S9--11 for the resurgent
approach to the question of the analytic classification of saddle-node
singularities, which one can develop once the analytic structure of the 
functions~$\wh\ph_n(\ze)$ is clear.
This approach relies on the use of \'Ecalle's \emph{alien calculus}: the singularities in
the $\ze$-plane are controlled through operators~$\De_m$, $m\in\Z^*$, called
alien derivations; 
mould calculus allows one to summarize the singular structure of the
functions $\wh\cV^\omb$ in the simple equation
$$
\De_m \wt\cV^\bul = \wt\cV^\bul \times V^\bul(m),
$$
where $V^\bul(m)$ is an alternal scalar-valued mould (the $V^\omb$'s are complex
numbers), 
with $n_1+\dotsb+n_r\neq m \;\Rightarrow\; V^{n_1,\dotsc,n_r}(m)=0$.

This leads to \'Ecalle's ``Bridge Equation'' for~$\Th$, which gives the alien
derivatives of all the resurgent functions~$\wt\ph_n$ and
in which the resurgent solution of the analytic classification problem is subsumed.

\parage
Another application of mould calculus in Resurgence theory is the use of
the formal series~$\wt\cV^\omb$ to construct ``resurgence monomials'' $\wt\cU^\omb$ which
behave as simply as possible under alien derivation.
This allows one to prove that alien derivations generate an infinite-dimensional
free Lie algebra.

\parage
Other problems of formal normalisation can be handled by an approach similar to
the one explained in Section~\ref{secnormalmdlexp}.
See \cite{irma} \S13 for the simple case of the linearisation of a vector field
with non-resonant spectrum;
much more complicated situations (taking into account resonances) are considered
in \cite{ES} or \cite{EVcorr}.
The strategy always begins by expanding the object to study in a sum of
homogeneous components~$B_n$ and considering the corresponding multiplicative
comould~$\bB_\bul$. 

It must be mentioned that, when this is applied to the analysis of a local
diffeomorphism, rather than a local vector field, the operators~$B_n$ are no
longer derivations, but they still satisfy a kind of modified Leibniz rule:
$$
\sig(B_n) = B_n \otimes \ID +
\sum_{n'+n''=n} B_{n'}\otimes B_{n''} 
+ \ID \otimes B_n.
$$
The resulting comould is not cosymmetr\hspace{-1pt}{\em a}l but cosymmetr\hspace{-1pt}{\em e}l, a property
which involves ``contracting shuffling coefficients'' instead of the shuffling
coefficients $\tsh{\omb^1}{\omb^2}{\omb}$. 
Correspondingly, the relevant symmetry properties for the moulds to be
contracted into~$\bB_\bul$ are ``symmetrelity'' and ``alternelity''.

The theory can thus be extended so as to treat on an equal footing vector fields
and diffeomorphisms.

\vfill
\pagebreak


\frenchspacing

\end{document}